\documentclass{amsart}

\usepackage{amsmath,amssymb,amsthm}
\usepackage{graphicx}

\usepackage[ruled,vlined,noend]{algorithm2e}
\SetKwProg{Function}{Function}{}{}
\SetKwInOut{Input}{Input}
\SetKwInOut{Output}{Output}
\usepackage{wasysym}
\usepackage{hyperref}
\usepackage{tikz-cd}

\SetKw{True}{true}
\SetKw{False}{false}
\newcommand{\OO}{\mathcal{O}}
\newcommand{\Q}{\mathbb{Q}}
\newcommand{\N}{\mathbb{N}}
\newcommand{\Z}{\mathbb{Z}}
\newcommand{\R}{\mathbb{R}}

\newcommand{\ta}{\tilde{\alpha}}

\DeclareMathOperator{\Tr}{Tr}
\DeclareMathOperator{\cv}{cv}

\newtheorem{remark}{Remark}[section]
\newtheorem{definition}{Definition}
\newtheorem{theorem}{Theorem}
\newtheorem{proposition}{Proposition}
\newtheorem{lemma}{Lemma}

\begin{document}

\title{Explicit Euclidean division algorithms for some degree 8 number rings}
\author{Christophe Levrat}
\date{\today}

\maketitle

\begin{abstract}
This article focuses on some rings of integers of number fields which are known to be norm-Euclidean domains, but for which no explicit algorithm computing the Euclidean division has yet been studied or implemented. The rings of integers we are interested in were proven to be Euclidean by H.W. Lenstra, Jr in 1978; they include the $n$-th cyclotomic rings for $n=15,20,24$. We present an algorithm performing Euclidean division in these rings based on Lenstra's proof and a closest vector computation by Conway and Sloane, and study its complexity. We give a complete implementation of the algorithm in \textsc{SageMath}. We also estimate the size of the remainders obtained when computing Euclidean divisions with this algorithm.
\end{abstract}

\section{Introduction}

This article deals with efficient division algorithms in Norm-Euclidean rings of integers of number fields.

\paragraph{Euclidean and Norm-Euclidean rings of integers} 
An integral domain $R$ is said to be Euclidean if there exists a map $\theta\colon R-\{0\} \to \N$ 
such that for all $(a,b) \in R-\{ 0\}$, there exist $q,r\in R$ such that \[ a = bq + r \qquad \text{and}\qquad \text{either }r=0\text{ or }\theta(r)<\theta(b).\]
Any Euclidean domain is a principal ideal domain, and being able to compute such Euclidean divisions immediately allows to compute gcd's in the ring. 
\begin{remark} We will call such a map $\theta$ a \textit{Euclidean function}; it used to be called \textit{Euclidean algorithm} (see \cite{motzkin1949}, \cite{lemmermeyer2004}), a designation we will not use in order to avoid any confusion with actual algorithm computing gcd's.
\end{remark}
Denote by $F$ a number field, i.e. finite extensions of $\Q$,  and by $\OO_F$ its ring of integers. The question of Euclideanity for $\OO_F$ is twofold: \begin{itemize}
\item Is $\OO_F$ a Euclidean domain? If it is, then...
\item Is the norm $N_{F/\Q}\colon \OO_F-\{ 0\}\to \N$ a Euclidean function for $\OO_F$? If this is the case, then $\OO_F$ (and by extension $F$) is said to be \textit{Norm-Euclidean}. 
\end{itemize}
Note that a Euclidean domain may have different Euclidean functions; a generic construction of such a function for any Euclidean domain is given in \cite{motzkin1949}.

\paragraph{Cyclotomic rings} Consider a positive integer $n$ such that $n\not\equiv 2\mod 4$. Let $\zeta_n$ denote a primitive $n$-th root of unity. The
cyclotomic number field $K_n = \Q(\zeta_n)$ has degree $\varphi(n)$ over $\Q$, where $\varphi$ denotes the Euler totient function. Its ring of
integers is $\Z[\zeta_n]$.

\paragraph{Euclidean cyclotomic rings} Harper showed in \cite[Theorem C]{harper2004} that the ring $\Z[\zeta_n]$ is Euclidean if and only if it is a principal ideal domain. The complete list of the 30 integers $n\neq 2 \mod 4$ for which $\Z[\zeta_n]$ is a principal ideal domain may be found in \cite[§8]{lemmermeyer2004}, the biggest value of $n$ in this list is 84. Among these, much fewer are known to be Norm-Euclidean: for \[ n=1,3,4,5,7,8,9,11,12,13,15,16,20,24\]
the ring $\Z[\zeta_n]$ is known to be Norm-Euclidean. 
This list includes four cases where $\varphi(n) = 8$, namely $n \in
\{15,16, 20, 24\}$. The ring $\Z[\zeta_{16}]$ was proven to be norm-Euclidean by Ojala \cite{ojala77}. In the remaining cases, Lenstra's proof of Euclideanity \cite{lenstra1978} applies.
Note that $\Z[\zeta_{15}]$ and $Z[\zeta_{20}]$ had already been proven to be Euclidean in~\cite{lenstra1975}, but non constructively.
For bigger values of $n$, very little is known about norm-Euclideanity of $\Z[\zeta_n]$.

\paragraph{Effectively Euclidean number rings} Not all proven Euclidean domains come with a straightforward algorithm performing Euclidean division. In some small number rings, Euclidean division is performed by flooring (as in $\Z$: the quotient in the division of $a$ by $b$ is $\lfloor a/b\rfloor$) or by rounding (as in $\Z[i]$).
In \cite{kaiblinger2011}, Kaiblinger gives an exhaustive list of the integers $n\not\equiv 2\mod 4$ such that flooring or rounding immediately yields the quotient of the Euclidean division in $\Z[\zeta_n]$. The biggest such $n$ is 12 \cite[Theorem 1]{kaiblinger2011}. In the cases we are interested in, the approach is similar but simple flooring or rounding of the coordinates is not enough.

\paragraph{Lenstra's proof} In \cite{lenstra1978}, Hendrik Lenstra proves that the rings of integers of the fields \[ \Q(\zeta_{15}),\Q(\zeta_{20}),\Q(\zeta_{24})\text{ and }\Q(\sqrt{3},\sqrt{5},\sqrt{-1})\]
are norm-Euclidean. In his proof, he defines a bilinear product on these rings which endows them with the structure of a lattice generated by a specific root system. Using properties of this root system, he explains
how to compute the Euclidean division of $a\in R$ by $b\in R-\{ 0\}$: if $q$ denotes the quotient and $r$ the remainder, then $\frac{r}{b}$ has the same norm (w.r.t. this bilinear product) as an element of a fundamental simplex of the lattice, which is at most 1. The AM/GM inequality then ensures that $N_{F/\Q}(r/b)\leqslant 1$.

\paragraph{Our contribution} In this paper, we explain how Lenstra's proof of Euclideanity  can be reformulated in terms of the closest vector problem (CVP) in a lattice of type $E_8$. We then use Conway and Sloane's algorithm \cite{conway_sloane822} to solve this problem efficiently, and compute the worst-case complexity of the resulting algorithm. 
We provide an implementation of this algorithm in \textsc{SageMath}
\footnote{available at \url{https://anonymous.4open.science/r/euclidean_div_degree8-145E/}} \cite{sagemath}. 
Finally, we estimate how far the resulting algorithm is from being optimal, i.e. how large the remainders in the Euclidean divisions performed by this algorithm can be.

\section{Root systems and lattices}\label{sec:roots}

Root systems are a ubiquitous object in mathematics; they appear notably in the classification of Lie groups and algebraic groups.
A detailed introduction to root systems may be found in \cite[Chapter 8]{hall13}.
Let $V$ be an $\R$-vector space of finite dimension $n$, equipped with a non-degenerate positive definite bilinear form $\langle\cdot \mid \cdot \rangle$. 
Given $\alpha\in V$ and $k\in \Z$, 
we denote by $H_{\alpha,k}$ the affine hyperplane 
$\langle\alpha\mid\cdot \rangle^{-1}(k)$ orthogonal to $\alpha$, and by $s_{\alpha,k}$ the orthogonal reflection about $H_{\alpha,k}$.
We set $H_{\alpha}=H_{\alpha,0}$ and $s_\alpha=s_{\alpha,0}$.

\begin{definition}
A \textit{root system} in $V$ is a finite subset $R$ of $V$ such that: \begin{itemize}
\item $R$ spans $V$ as an $\R$-vector space;
\item for all $\alpha\in R$, $\R \alpha\cap R=\{ \pm\alpha\}$;
\item for all $\alpha,\beta\in R$, $2\frac{\langle\alpha\mid\beta\rangle}{\langle\beta\mid\beta\rangle}\in\Z$;
\item for all $\alpha,\beta\in R$, $s_\alpha(\beta)\in R$.
\end{itemize}
\end{definition}

Elements of $R$ are called \textit{roots}. The connected components of $V-\cup_{\alpha\in R}H_\alpha$ are called \textit{chambers}, and the connected components of $V-\cup_{\alpha,k}H_{\alpha,k}$
are called \textit{alcoves}. The \textit{dual root} of a root $\alpha\in R$ is $\alpha^\vee=\frac{2}{\langle\alpha\mid \alpha\rangle}\alpha$. The set $R^\vee=\{ \alpha^\vee,\alpha\in R\}$ is also a root system.

A \textit{base} of $R$ is a tuple $(\alpha_1,\dots,\alpha_n)\in R^n$ such that every element of $R$ can be written as a linear combination $\sum_{i=1}^n u_i\alpha_i$ where the $u_i$ are integers which are all of the same sign.

Once a base $B=(\alpha_1,\dots,\alpha_n)$ has been fixed, we denote by $R^+$ the set of elements of $R$ whose coordinates in this base are nonnegative. 
There exists a \textit{maximal root} $\ta=\sum_i u_i \alpha_i\in R^+$ 
such that for all $\alpha=\sum_i v_i\alpha_i\in R^+$, for all $i\in \{1\dots n\}, u_i\geqslant v_i$.

The (closed) \textit{principal alcove} with respect to $B$ is \[ A_0=\{ x\in V\mid \forall \alpha\in R^+, 0\leqslant \langle x\mid  \alpha\rangle\leqslant 1\}\]

The \textit{Weyl group} $W(R)$ of $R$ is the subgroup of elements $\sigma\in SO(V)$ such that $\sigma(R)=R$. 
It is a subgroup of the affine Weyl group $W_a(R)$, whose elements are compositions of elements of $W(R)$ with translations by elements of $R^\vee$.

The Weyl group $W(R)$ acts simply transitively on the set of chambers, and the affine Weyl group $W_a(R)$ acts simply transitively on the set of alcoves.
The abelian group generated by $R$ is a lattice $L$.
The \textit{Voronoi region} around the origin with respect to $R$ is
\[ V_0(L)=\{  x\in V\mid \forall y\in L, \Vert x\Vert \leqslant \Vert x-y\Vert\}.\]
It is the set of vectors that are closer to 0 than to any other element of $L$.
We will be interested in a particular type of root system: the type $E_8$.

\begin{definition}\label{df:E8} Let $R$ be a root system in an $8$-dimensional $\R$-vector space $V$. The root system $R$ is said to be of type $E_8$ if there is a base $(\alpha_1,\dots,\alpha_8)$ of $R$ such that the matrix $(\langle \alpha_i\mid \alpha_j\rangle )_{i,j}$ is the following:
\[ M=\begin{pmatrix}
2 & 0 & -1 & 0 & 0 & 0 & 0 & 0 \\
0 & 2 & 0 & -1 & 0 & 0 & 0 & 0 \\
-1 & 0 & 2 & -1 & 0 & 0 & 0 & 0 \\
0 & -1 & -1 & 2 & -1 & 0 & 0 & 0 \\
0 & 0 & 0 & -1 & 2 & -1 & 0 & 0 \\
0 & 0 & 0 & 0 & -1 & 2 & -1 & 0 \\
0 & 0 & 0 & 0 & 0 & -1 & 2 & -1 \\
0 & 0 & 0 & 0 & 0 & 0 & -1 & 2
\end{pmatrix}
\]
\end{definition}
In particular, every element $x$ of a root system of type $E_8$ satisfies $\langle x\mid x\rangle=2$. The Voronoi region around the origin with respect to such a root system is a polytope with 19440 vertices and 240 facets \cite[III.E]{conway_sloane82}.

\section{Euclidean division in $F$: Lenstra's proof}\label{sec:eucl}
Consider a number field $F$ of degree 8 over $\Q$ which is a totally imaginary unramified quadratic extension of a totally real number field $K$. Supppose that the different ${\rm Diff}(K/\Q)$ is generated by a totally positive element $\delta$, and that the discriminant $\Delta_K$ of $K$ is at most 4096. There are exactly four fields satisfying these conditions:
\[ \Q(\zeta_{15}), \Q(\zeta_{20}), \Q(\zeta_{24}),\text{ and } \Q(\sqrt{3},\sqrt{5},\sqrt{-1}).\] Note that this does not apply to the totally imaginary extension $\Q(\zeta_{16})$ of the totally real field $\Q(\zeta_{16}+\zeta_{16}^{-1})$, because it is ramified at a finite place.
The main result of  \cite{lenstra1978} is the following.
\begin{theorem}
Under these hypotheses, the ring of integers $\OO_F$ of $F$ is norm-Euclidean.
\end{theorem}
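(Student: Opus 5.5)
We will follow Lenstra's strategy. On $F$ we define the bilinear form
\[ \langle x\mid y\rangle = \Tr_{F/\Q}\!\left(\frac{x\bar y}{\delta'}\right),\]
where $\bar{\cdot}$ denotes complex conjugation (the nontrivial automorphism of $F/K$). Here $\delta'$ is a suitable totally positive generator of the different of $F/\Q$. Since $F/K$ is unramified at finite places, ${\rm Diff}(F/\Q)=\delta\OO_F$, so we may take $\delta'=\delta$, or $2\delta$ depending on normalisation. Because $F$ is totally imaginary and $\delta$ is totally positive, $x\bar x/\delta$ is totally positive in $K$. Hence the form is positive definite on $V=F\otimes_\Q\R\cong\R^8$. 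Since $\delta$ generates the different, $\OO_F$ is unimodular for this form: its dual is $\delta^{-1}\OO_F$ twisted back by $\delta$, which is $\OO_F$ itself.

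The next step is to show that $\OO_F$ is an even unimodular lattice in dimension 8, hence isometric to the $E_8$ root lattice. Evenness requires $\langle x\mid x\rangle=\Tr_{K/\Q}(2x\bar x/\delta)\in 2\Z$ with the right normalisation. This holds because $x\bar x/\delta$ lies in $\delta^{-1}\OO_K$ and the trace of $\delta^{-1}\OO_K$ lies in $\Z$; the factor $2$ comes from $\Tr_{F/K}$. The appropriate normalisation is $\langle x\mid y\rangle=\Tr_{K/\Q}\bigl(\Tr_{F/K}(x\bar y)/\delta\bigr)$, and we need to check that $\Tr_{F/K}(x\bar x)\in 2\OO_K$ up to the correct scaling. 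This is where unramifiedness at $2$ enters; it is the technical heart, and a direct check on each of the four fields is a fallback. By the classification of even unimodular lattices in dimension 8, $\OO_F$ with this form is then a lattice of type $E_8$ in the sense of Definition~\ref{df:E8}.

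Given $a,b$ with $b\neq0$, set $x=a/b\in F\subset V$. We choose $q\in\OO_F$ such that $x-q$ lies in the Voronoi region $V_0(L)$ of the $E_8$ lattice. Using the affine Weyl group acting transitively on alcoves, we may move $x-q$ into the principal alcove $A_0$ up to an element of $W(R)$, which preserves norms. The points of $A_0$ farthest from $0$ are its vertices, namely $\ta^\vee$-scaled fundamental weights $\omega_i/u_i$. For $E_8$, the covering radius squared is $1$, which gives $\langle x-q\mid x-q\rangle\le 1$.

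Write $y=x-q$. By AM--GM applied to the eight conjugates (four pairs of complex embeddings $\sigma_j$),
\[ \langle y\mid y\rangle=\sum_{j=1}^4 \frac{2|\sigma_j(y)|^2}{\sigma_j(\delta)}\ \ge\ 8\Bigl(\prod_j \frac{|\sigma_j(y)|^2}{\sigma_j(\delta)}\Bigr)^{1/4} = 8\,\frac{N_{F/\Q}(y)^{1/4}}{N_{K/\Q}(\delta)^{1/4}}.\]
Since $N(\delta)=\Delta_K\le 4096=8^4$, this gives $N_{F/\Q}(y)\le (\langle y\mid y\rangle\,\Delta_K^{1/4}/8)^4\le1$. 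Strict inequality is needed, and it comes from the fact that equality in both AM--GM and the covering bound cannot hold simultaneously. Alternatively, when $\Delta_K<4096$ (which is the case for all four fields: $1125,2000,2304,3600$) strictness is automatic. Then $r=a-bq$ satisfies $N(r)=N(b)N(y)<N(b)$, so the norm is a Euclidean function on $\OO_F$.

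The main obstacle is pinning the normalisation so that $\OO_F$ is exactly even unimodular. Concretely, this means showing that $\Tr_{F/K}(x\bar x)/\delta$ has even trace, which uses that $F/K$ is unramified above $2$.
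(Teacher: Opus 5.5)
Your proposal is correct and follows the same route as the paper's (Lenstra's) outline. The trace form $\Tr_{F/\Q}(x\bar y/\delta)$ makes $\OO_F$ an $E_8$ lattice; you justify this via the classification of even unimodular lattices, whereas the paper simply asserts it. The alcove/Voronoi argument then gives $\Vert x-q\Vert\le 1$, and AM--GM gives $N_{F/\Q}(x-q)\le \Vert x-q\Vert^8\Delta_K/4096<1$, since $\Delta_K<4096$ for the four fields. The ``main obstacle'' you name at the end is not one: evenness is immediate because $\Tr_{F/K}(z)=2z$ for $z=x\bar x/\delta\in\delta^{-1}\OO_K$. Unramifiedness of $F/K$ is used only to get ${\rm Diff}(F/\Q)=\delta\OO_F$, i.e.\ unimodularity, not anything at $2$.
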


The outline of the proof is the following. Define an inner product on $F$ by:
\[ \langle x\mid y\rangle =\Tr_{F/\Q}\left( \frac{x\bar y}{\delta}\right).\]
Then the ring of integers $\OO_F$ of $F$ is a rank 8 lattice $\Gamma_8$, generated by the root system
$R=\{ x\in \OO_F\mid \langle x\mid x\rangle=2\}$ which is of type $E_8$.

Now consider $a,b\in \OO_F$, and set $x=ab^{-1}$. One wishes to compute $q\in \OO_F$ such that $N_{F/\Q}(x-q)<1$, because then $r=a-bq$ satisfies $N_{F/\Q}(r)<N_{F/\Q}(b)$. 

The key inequality in \cite{lenstra1978}, based on that between arithmetic and geometric mean, is the following, valid for any $z\in F$:

\[ N_{F/\Q}(z) \leqslant \frac{\Vert z\Vert ^8 \Delta_K}{4096}.\]
Therefore, the condition on the discriminant ensures that $N_{F/\Q}(z)<1 $ for all $z\in F$ such that $\Vert z\Vert \leqslant 1$.
Hence, it is enough to find $q$ such that $\Vert x-q\Vert \leqslant 1$. Lenstra's argument is the following: since the affine Weyl group $W_a(\Gamma_8)$ -- whose elements are compositions of linear isometries and translations by elements of $\Gamma_8$ -- acts transitively on the set of all alcoves, one may find $q\in\Gamma_8$ and $w\in W(\Gamma_8)$ such that $w(x-q)$ belongs to the fundamental alcove, which ensures that $\Vert x-q\Vert \leqslant 1$.

\section{Link with a closest vector problem}

Lenstra's computation of the quotient may be reformulated in terms of a closest vector problem. Indeed, the following result due to Conway and Sloane entails that the found quotient $q$ is actually a closest vector in the lattice $\Gamma_8$ to the vector $x$.

\begin{theorem}\cite[Theorem 5]{conway_sloane82}
For any root lattice $\Lambda$, the Voronoi region
around the origin is the union of the images of the principal alcove under the Weyl group $W(\Lambda)$. 
\end{theorem}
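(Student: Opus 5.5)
We will prove the two inclusions $\bigcup_{w\in W}w(A_0)\subseteq V_0(\Lambda)$ and $V_0(\Lambda)\subseteq\bigcup_{w\in W}w(A_0)$. Throughout we use that for a simply-laced root lattice (all roots of norm $2$, as for $E_8$) we have $R^\vee=R$, so translations in $W_a$ are exactly translations by elements of $\Lambda$. The key tool is a characterization of $V_0$ by finitely many inequalities, namely those coming from the roots.

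\textbf{Step 1: $V_0$ is cut out by the roots.} By definition, $x\in V_0$ iff $\|x\|\le\|x-y\|$ for all $y\in\Lambda$, i.e.\ $\langle x\mid y\rangle\le \tfrac12\langle y\mid y\rangle$. We claim it suffices to impose this for $y\in R$, i.e.\ $\langle x\mid\alpha\rangle\le 1$ for all roots $\alpha$. This is the main obstacle. The plan is to show that the roots are exactly the ``Voronoi-relevant'' vectors of $\Lambda$. Equivalently, every nonzero $y\in\Lambda$ that is not a root can be written as $y=y_1+y_2$ with $y_i\in\Lambda\setminus\{0\}$ and $\langle y_1\mid y_2\rangle\ge0$. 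Then
\[\langle x\mid y\rangle=\langle x\mid y_1\rangle+\langle x\mid y_2\rangle\le\tfrac12(\|y_1\|^2+\|y_2\|^2)\le\tfrac12\|y\|^2,\]
and induction on $\|y\|^2$ reduces everything to roots. To produce the decomposition, write $y\neq0$, $y\notin R$, as an integer combination of roots. Then $\|y\|^2=\sum c_\alpha\langle y\mid\alpha\rangle$ is positive, so there is a root $\alpha$ with $\langle y\mid \alpha\rangle\ge 1$ (the pairing is integral since $\Lambda$ is even and integral). Take $y_1=\alpha$ and $y_2=y-\alpha$. Then $\langle\alpha\mid y-\alpha\rangle=\langle y\mid\alpha\rangle-2\ge -1$. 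If $\langle y\mid\alpha\rangle\ge2$ we are done. If every root has pairing at most $1$ with $y$, then $y$ itself lies in the region, and we argue separately: since $\|y\|^2\ge4$ (the lattice is even and $y$ is not a root), we replace $\alpha$ by a suitable sum of two orthogonal roots, or handle this case directly through the alcove description in Step 2.

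\textbf{Step 2: the alcove description.} Using $\langle x\mid\alpha\rangle\le1$ for all $\alpha\in R$ (which by symmetry $\alpha\mapsto-\alpha$ means $|\langle x\mid\alpha\rangle|\le 1$), take $x\in V_0$. Since $W$ acts transitively on chambers, choose $w\in W$ with $w(x)$ in the closed fundamental chamber, so that $\langle w x\mid\alpha\rangle\ge0$ for $\alpha\in R^+$. Because $W$ preserves $R$ and the inner product, $|\langle wx\mid\alpha\rangle|\le1$ still holds. Hence $0\le\langle wx\mid\alpha\rangle\le1$ for all $\alpha\in R^+$, i.e.\ $wx\in A_0$. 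Conversely, if $x\in A_0$ then $0\le\langle x\mid\alpha\rangle\le 1$ for all positive roots, hence $|\langle x\mid\alpha\rangle|\le1$ for all roots. By Step 1 this gives $x\in V_0$, and $V_0$ is $W$-stable since $W$ preserves $\Lambda$ and norms. This yields both inclusions.

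Step 1 is where the real work lies, since it is the only place where the specific structure of root lattices (even, generated by norm-$2$ vectors) is used. For the troublesome subcase we may alternatively argue as follows. Write $y=wy'$ with $y'$ dominant, and note that a dominant non-root nonzero vector has $\langle y'\mid\ta\rangle\ge2$, because $\ta$ has the largest coefficients and the fundamental weights pair positively with it. This gives a root with pairing at least $2$, closing the induction.
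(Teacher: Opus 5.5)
The paper does not prove this statement; it quotes it from Conway--Sloane. So your argument has to stand on its own. Its architecture is sound. Step~2 is correct, and so is the reduction in Step~1: a splitting $y=y_1+y_2$ with $y_i\neq 0$ and $\langle y_1\mid y_2\rangle\geq 0$ for every nonzero non-root $y$ lets induction on $\Vert y\Vert^2$ reduce the Voronoi inequalities to the roots.

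\textbf{The gap.} You never actually establish that splitting, i.e.\ a root $\beta$ with $\langle y\mid\beta\rangle\geq 2$. You offer three remedies for the bad case:
\begin{itemize}
\item The first fails. If $y_1=\alpha+\beta$ with $\alpha\perp\beta$ and both pairings with $y$ are at most $1$, then $\langle y_1\mid y-y_1\rangle=\langle y\mid y_1\rangle-4\leq -2$.
\item The second is circular as written. The half of Step~2 giving $A_0\subseteq V_0$ rests on Step~1. Without it you would need $A_0\cap\Lambda=\{0\}$ independently, which is the same missing fact.
\item The third is the right idea, but ``the fundamental weights pair positively with $\ta$'' only gives $\langle y'\mid\ta\rangle=\sum_i u_i\langle y'\mid\alpha_i\rangle\geq 1$. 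Equality holds exactly when $y'=\omega_i$ is a fundamental weight with $u_i=1$. Such coefficients occur in $A_n$, $D_n$, $E_6$, $E_7$. So for the statement as given (any root lattice) you still owe a proof that these minuscule weights are not in $\Lambda$.
\end{itemize}
For $E_8$, the only case the paper uses, the bound is immediate: $\ta=2\alpha_1+3\alpha_2+4\alpha_3+6\alpha_4+5\alpha_5+4\alpha_6+3\alpha_7+2\alpha_8$ has every coefficient at least $2$. But you have to say so; ``largest coefficients'' is not the property that matters.

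\textbf{A uniform repair.} Write $y\in\Lambda\setminus\{0\}$ as $\beta_1+\dots+\beta_m$ with $\beta_k\in R$ and $m$ minimal; this is possible since $R=-R$ generates $\Lambda$. For $k\neq l$, the inner product $\langle\beta_k\mid\beta_l\rangle$ cannot be $-2$, since then $\beta_l=-\beta_k$ and the pair cancels. It cannot be $-1$ either, since then $\beta_k+\beta_l=s_{\beta_l}(\beta_k)\in R$ and the pair merges. So all these inner products are $\geq 0$. Taking $y_1=\beta_1$ and $y_2=\beta_2+\dots+\beta_m$ gives your splitting when $y\notin R$.

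In fact no induction is then needed. Suppose $\langle x\mid\beta\rangle\leq 1$ for all $\beta\in R$. Then $\langle x\mid y\rangle\leq m\leq\frac12\Vert y\Vert^2$, because $\Vert y\Vert^2=2m+2\sum_{k<l}\langle\beta_k\mid\beta_l\rangle\geq 2m$. With this, your Steps~1 and~2 give a complete proof for every simply-laced root lattice.
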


Hence, in order to perform a Euclidean division in $F$, it is enough to solve a closest vector problem for the lattice $\Gamma_8$.

\paragraph{Generic algorithms} The closest vector problem (CVP) is subject of intensive research, in particular because of its use in post-quantum cryptography. Generic methods compute a lattice point which is very close to $x$; such algorithms are implemented in computer algebra systems such as \textsc{SageMath} and \textsc{Magma}. A survey on the problems in lattice-based cryptography and these algorithms can be found in \cite{deboer_vanwoerden25}. However, the lattice point $u$ which is returned by the algorithm is not provably equal to the closest lattice point $v$. Indeed, they only guarantee that there is a constant $c>1$ such that 
\[ \Vert x-u\Vert\leqslant c\Vert x-v\Vert.\] 
Since the covering radius of $\Gamma_8$ is 1, it is possible that these approximate algorithms sometimes return a vector $u$ such that \[ \Vert x-u\Vert >1.\]

There are also deterministic procedures to compute \textit{the} closest vector to $x$ in a lattice. One of these is implemented in \textsc{SageMath}, but its performance is already prohibitive in dimension 8.

\paragraph{The case of $E_8$: an algorithm due to Conway and Sloane} There is a standard root system of type $E_8$ in $\R^8$, described for instance in \cite[Ch. 6, Planche VII]{bourbaki07}, which admits a base in which the usual inner product on $\R^8$ has the matrix $M$ given in Section \ref{sec:roots}.
For the lattice generated by this satandard root system, Conway and Sloane describe in \cite[§6]{conway_sloane822} an algorithm computing the closest lattice point to any vector of $\R^8$ in a constant number of operations. 
It is presented in Algorithm \ref{alg:closest}. 
Given a vector $x\in \R^8$, denote by $[x]\in\Z^8$ the vector whose coordinates are the closest integers to those of $x$ (taking the floor for half-integers). 
Denote by $/x]$ the vector obtained the same way as $[x]$, but where one coordinate (the first one for which the distance to the nearest integer is the highest) is rounded the wrong way. 
Denote by $\underline{\frac{1}{2}}$ the vector $\frac{1}{2}(1,\dots,1)\in\Q^8$.

\begin{algorithm}[h]
\SetAlgoLined  
\caption{\textsc{ClosestVectorE8}}\label{alg:closest}
\KwData{Vector $x\in \R^8$
}
\KwResult{Vector $y\in\Gamma_8$ which minimises $\Vert x-y\Vert$}
\hrulefill \\
$(u_1,\dots,u_8)\leftarrow[x]$\\
\If{$u_1+\dots+u_8\equiv 0\mod 2$}
{$y_0=[x]$}
\Else{$y_0=/x]$}
$(v_1,\dots,v_8)\leftarrow \left[x-\underline{\frac12}\right]$\\
 \If{$v_1+\dots+v_8\equiv 0\mod 2$}
{$y_1=\left[x-\underline{\frac12}\right]+\underline{\frac12}$}
\Else{$y_1=\left/x-\underline{\frac12}\right]+\underline{\frac12}$}
\If{$\Vert y_0-x\Vert^2 \leqslant \Vert y_1-x\Vert^2$}{
\Return $y_0$}
\Else{\Return $y_1$}
\end{algorithm}

\section{Algorithm and Complexity}

Our simple algorithm to perform Euclidean division in $F$ is the following. We have precomputed a basis $B$ of $\OO_F$ in which the trace form defined in Section \ref{sec:eucl} has the matrix $M$ defined at the end of Section \ref{sec:roots}.

\begin{algorithm}[h]
\SetAlgoLined  
\caption{\textsc{EuclideanDivision}}\label{alg:eucldiv}
\KwData{Algebraic integers $a\in\OO_F, b\in\OO_F-\{ 0\}$ given by their coordinates in the precomputed basis $B$ 
}
\KwResult{$(q,r)\in\OO_F^2$ given by their coordinates in $B$ s.t. $a=bq+r$ and either $r=0$ or $N_{F/\Q}(r)<N_{F/Q}(b)$}
\hrulefill \\
$x\leftarrow$ vector of coordinates of $\frac{a}{b}$ in basis $B$\\
$q\leftarrow \textsc{ClosestVectorE8}(x)$\\
$r\leftarrow a-bq$\\
\Return $(q,r)$
\end{algorithm}

\begin{proposition}\label{prop:comp} Given $a,b\in\OO_F$ whose coordinates in the basis $B$ are fractions of integers of bitlength at most $n$,
Algorithm \ref{alg:eucldiv} computes $q,r\in\OO_F$ such that $a=bq+r$ and $N_{F/\Q}(r)<N_{F/\Q}(b)$ in \[ \OO(M(n))=\OO(n\log(n)) \] bit operations, where $M(n)$ denotes the complexity of multiplying two $n$-bit integers.
\end{proposition}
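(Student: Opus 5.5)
The proof has two parts: correctness, which follows from the results already recalled, and a bit-complexity count for each step of Algorithm \ref{alg:eucldiv}, where every step turns out to be a bounded number of arithmetic operations on integers of bitlength $O(n)$.

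\textbf{Correctness.} By construction of the basis $B$, the coordinate map identifies $(\OO_F,\langle\cdot\mid\cdot\rangle)$ isometrically with the standard $E_8$ lattice $\Gamma_8\subset\R^8$ of Conway and Sloane, since both have Gram matrix $M$. Hence Algorithm \ref{alg:closest} returns a closest lattice point $q$ to $x=a/b$. By \cite[Theorem 5]{conway_sloane82}, $x-q$ lies in the Voronoi region, which is the union of the $W$-images of the principal alcove $A_0$. Since $A_0$ is contained in the ball of radius $1$ (the covering radius), we get $\Vert x-q\Vert\leqslant 1$. Lenstra's inequality $N_{F/\Q}(z)\leqslant \Vert z\Vert^8\Delta_K/4096$, together with $\Delta_K\leqslant 4096$, then gives $N(x-q)\leqslant 1$. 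We need strict inequality, which I would obtain from the AM/GM equality case: equality would force all $|\sigma(z)|^2/\sigma(\delta)$ to be equal, which I would rule out for these fields, or else treat that case separately. Multiplicativity of the norm then gives $N(r)=N(b)N(x-q)<N(b)$.

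\textbf{Complexity.} Write $a$ and $b$ by their coordinates in $B$, as fractions of $n$-bit integers.
\begin{enumerate}
\item Computing $x=a/b$ amounts to a product $a\cdot \bar b\cdot(\text{conjugates})$ divided by $N(b)$, or equivalently to inverting the $8\times 8$ multiplication matrix of $b$, whose structure constants are fixed by $F$. This is a constant number of multiplications and additions of integers of size $O(n)$: the degree is fixed at $8$, so the determinants involve only a bounded number of terms. The result is $x$ with rational coordinates of bitlength $O(n)$, at cost $O(M(n))$.
\item Algorithm \ref{alg:closest} is then applied. Rounding a rational $p/s$ to the nearest integer is one Euclidean division of $O(n)$-bit integers, costing $O(M(n))$ by Newton iteration. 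Determining the coordinate farthest from an integer takes a constant number of comparisons of fractions, each a cross-multiplication. The two squared distances $\Vert y_i-x\Vert^2$ are computed in the standard coordinates, since the lattice is embedded isometrically and the change of coordinates is a fixed rational matrix, and comparing them is again $O(M(n))$. The output $q$ has integer (or half-integer) coordinates of size $O(n)$.
\item Finally, $r=a-bq$ is a constant number of multiplications of $O(n)$-bit numbers by the fixed structure constants.
\end{enumerate}
Summing the three steps gives $O(M(n))$. With Harvey--van der Hoeven multiplication, $M(n)=O(n\log n)$.

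\textbf{Main obstacle.} The step needing the most care is bounding the bitlengths: the coordinates of $x$ after division by the norm $N(b)$, which is a degree-$8$ form in the coordinates of $b$, must stay $O(n)$ bits, so that no step ever multiplies integers of superlinear size. I would bound this explicitly by $8n+O(1)$ bits using the fixed degree. The strictness of the norm inequality is a secondary subtlety.
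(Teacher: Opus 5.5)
Your proposal is correct, and its complexity count is essentially the paper's proof: coordinates of $a/b$ of bitlength $O(n)$, fixed base changes, a bounded number of roundings each done by a division with remainder in $O(M(n))$, and Harvey--van der Hoeven; your correctness part is the same Lenstra/Conway--Sloane reasoning that the paper's proof leaves to Sections 3--4. The strictness you flagged needs no AM/GM equality analysis, because for the four fields $\Delta_K\in\{1125,2000,2304,3600\}$, all strictly below $4096$, so $N_{F/\Q}(x-q)\leqslant\Delta_K/4096<1$ directly.
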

\begin{proof} The coordinates in $B$ of $\frac{a}{b}$ still have bitlength $O(n)$. The base changes in the algorithm all require a fixed number of additions and multiplications of $O(n)$-bit integers.
Algorithm \ref{alg:closest} consists in a fixed finite number of roundings, additions and squarings. A rounding is computed using a division with remainder, hence also requires $O(M(n))$ operations \cite[Theorem 9.8]{vzg13}. The fact that $M(n)=\OO(n\log(n))$ is proven in \cite[Theorem 1.1]{harvey_vdh}.
\end{proof}

\begin{remark}
Proposition \ref{prop:comp} entails that for $a,b\in\OO_F$ of logarithmic height at most $n$, our Euclidean division algorithm requires $O(M(n))$ bit operations.
\end{remark}

The graph in Figure \ref{fig:timings} below shows the average time taken by a Euclidean division of two $n$-bit integers using our implementation in \textsc{SageMath} of Algorithm \ref{alg:eucldiv}. The benchmark was carried out on a laptop with an Intel Core i7-13800H CPU and 32GB RAM, using \textsc{SageMath} 10.7.

\begin{figure}[h]
\includegraphics[scale=0.5]{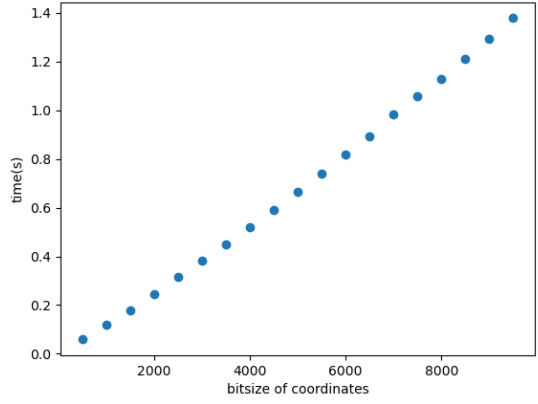}
\caption{Average time needed to compute a division of two elements in $\Z[\zeta_{15}]$ whose coordinates in the base $B$ are fractions of integers of bit length $n$} \label{fig:timings}
\end{figure}

\section{Euclidean minima and size of the computed remainders}\label{sec:eucmin}

The quotient and remainder in a Euclidean division are not necessarily unique. 
The \textit{Euclidean minimum} of a norm-Euclidean number field indicates how small the norm of the remainder can be in a Euclidean divison.

\begin{definition}
The Euclidean minimum of a number field $F$ is \[ M_F=\sup_{x\in F}\inf_{y\in x+\OO_F} |N_{F/\Q}(y)|.\]
\end{definition}

It is easy to see that if $M_F<1$, the field $F$ is norm-Euclidean, and if $M_F>1$, it is not. 
An algorithm computing Euclidean divisions in $F$ is said to be \textit{optimal} if, for all $a,b\in\OO_F$, it outputs a pair $(q,r)\in\OO_F^2$ such that $|N_{F/\Q}(r)|$ is minimal; in particular, this entails that $|N_{F/\Q}(r)|\leqslant M_F$.

The Euclidean minima of $\Q(\zeta_n)$, $n=15,20,24$ are given in the table below \cite[Table 2]{Lezowski2014}:
\begin{center} \begin{tabular}{| c| c| c| c| }
\hline
$n$ & 15 & 20 & 24 \\ \hline
$M_{\Q(\zeta_n)}$ & 1/16 & 1/5 & 1/4 \\ \hline
\end{tabular}
\end{center}

Concerning our algorithm, one may directly deduce from Lenstra's proof in \cite{lenstra1978} a first bound on the size of remainders output by our algorithm.

\begin{lemma} Let $F$ be one of the four fields to which Algorithm \ref{alg:eucldiv} applies. For all $a,b\in\OO_F$, the output $(q,r)$ of Algorithm \ref{alg:eucldiv} satisfies \[ N_{F/\Q}(r) \leqslant \frac{\Delta_K}{4096} N_{F/\Q}(b).\]
\end{lemma}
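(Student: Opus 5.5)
The plan is to combine three facts already stated in the paper: the Conway--Sloane theorem on Voronoi regions of root lattices, the fact that the covering radius of $\Gamma_8$ is $1$, and Lenstra's AM/GM inequality $N_{F/\Q}(z)\leqslant \Vert z\Vert^8\Delta_K/4096$. If $b$ divides $a$ there is nothing to prove, since then $x=a/b\in\OO_F$, \textsc{ClosestVectorE8} returns $q=x$, and $r=0$. Otherwise we set $x=a/b\in F$ and let $q$ be the output of Algorithm \ref{alg:closest}. Because $B$ was chosen so that the trace form has Gram matrix $M$, the coordinate map $F\to\Q^8\subset\R^8$ is an isometry from $(F,\langle\cdot\mid\cdot\rangle)$ onto $\R^8$ with the standard $E_8$ inner product, and it sends $\OO_F=\Gamma_8$ onto the standard $E_8$ lattice. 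Hence $q\in\OO_F$ minimises $\Vert x-q\Vert$ over $\OO_F$.

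Next I bound $\Vert x-q\Vert$. Translating by $q$, the vector $x-q$ is at least as close to $0$ as to any other lattice point, so it lies in the Voronoi region $V_0(\Gamma_8)$. By the Conway--Sloane theorem, $V_0(\Gamma_8)=\bigcup_{w\in W}w(A_0)$, so $x-q=w(y)$ for some $y\in A_0$, and $\Vert x-q\Vert=\Vert y\Vert$ because $w$ is an isometry. It therefore remains to show $\Vert y\Vert\leqslant 1$ for every $y\in A_0$. Since $A_0$ is convex and the squared norm is convex, the maximum of $\Vert y\Vert^2$ on $A_0$ is attained at a vertex. The vertices of $A_0$ are $0$ and $\omega_i/u_i$, where $\omega_i$ is the fundamental weight dual to $\alpha_i$ and $u_i$ is the coefficient of $\alpha_i$ in the maximal root $\ta$. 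Each squared norm $\langle\omega_i\mid\omega_i\rangle/u_i^2$ can be read off from the diagonal of $M^{-1}$. This finite check is the only step with real content, and it recovers the known fact that the covering radius of $E_8$ is $1$; I would simply cite that fact as the paper does. The value $1$ is attained at the vertex $\omega_8/u_8$ in the labelling of $M$.

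Finally I apply Lenstra's inequality to $z=x-q=r/b$:
\[
\frac{N_{F/\Q}(r)}{N_{F/\Q}(b)} = N_{F/\Q}(x-q)\leqslant \frac{\Vert x-q\Vert^8\,\Delta_K}{4096}\leqslant\frac{\Delta_K}{4096},
\]
using multiplicativity of the norm and the fact that $N_{F/\Q}$ is positive on the totally imaginary field $F$. Multiplying by $N_{F/\Q}(b)>0$ gives the claim. I expect the main obstacle to be the identification step, namely checking that the isometry really carries $\OO_F$ onto the standard $E_8$ lattice of Algorithm \ref{alg:closest}. This requires the precomputed basis $B$ to be a base of the root system $R$ with Gram matrix exactly $M$, together with Lenstra's result that $R$ generates $\OO_F$.
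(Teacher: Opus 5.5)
Your proposal is correct and follows the paper's own argument: Algorithm~\ref{alg:closest} returns a closest lattice point, so $\Vert a/b-q\Vert$ is at most the covering radius $1$ of $E_8$, and Lenstra's inequality $N_{F/\Q}(z)\leqslant \Delta_K\Vert z\Vert^8/4096$ applied at $z=r/b$ gives the bound (the paper simply cites the covering radius, which is what you end up doing too). Two small corrections, neither of which affects the argument: first, in the labelling of $M$ one has $\omega_8=\ta$, so $\Vert\omega_8/u_8\Vert^2=1/2$, and the value $1$ is attained instead at $\omega_1/u_1=\omega_1/2$ (since $(M^{-1})_{11}=4$ and $u_1=2$); second, $B$-coordinates identify $\OO_F$ with $\Z^8$ carrying the form $M$, not with the Conway--Sloane lattice, so Algorithm~\ref{alg:closest} must be applied after the fixed change of basis sending the $i$-th basis vector to Bourbaki's simple root $\alpha_i\in\R^8$, a step the paper also leaves implicit.
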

\begin{proof}
This follows directly from Lenstra's proof, by the inequality \[ \left\vert N_{F/\Q}\left(\frac{a}{b}-q\right)\right\vert\leqslant \frac{\Delta_K}{4096}\left\Vert \frac{a}{b}-q\right\Vert^8\]
and the fact that the covering radius of root systems of type $E_8$ is 1.
\end{proof}

This, however, is only an upper bound on the norm of the remainder. 
Set \begin{align*} A_F&=\sup_{x\in F} |N_{F/\Q}(x-\cv(x))|
\end{align*}
where $\cv(x)$ is the closest vector to $x$ in $\Gamma_8$ returned by Algorithm \ref{alg:closest}.
From this definition, we immediately deduce that 
\[ A_F=\sup_{a,b\in \OO_F-\{ 0\}}\frac{|N_{F/\Q}(r_{ab})|}{|N_{F/\Q}(b)|}\]
where $r_{ab}$ is the remainder in the Euclidean division of $a$ by $b$ computed using Algorithm \ref{alg:eucldiv}. We also note that
\[ A_F\leqslant \sup_{x\in \bar V_0}|N_{F/\Q}(x)|\]
where $\bar V_0$ denotes the closed Voronoi region around the origin with respect to the lattice $\Gamma_8$.
It follows from the previous discussion that \[ M_F \leqslant A_F\leqslant \frac{\Delta_K}{4096}.\]

\paragraph{Estimating $A_F$} A method to estimate $A_F$ is to numerically compute the maximum of $|N_{F/\Q}|$ on the closed Voronoi region around 0 in $F$.
Fixing a basis of the $\Q$-vector space $F$, the norm $N_{F/\Q}$ becomes a homogeneous 8-variate polynomial of degree 8. In particular, its maximum on the convex polytope $V_0$ is reached on one of its 240 facets, which are obtained by reflecting one facet around the linear hyperplanes which are orthogonal to one of the 240 roots in the roots system $\Gamma_8$.

We used \textsc{SageMath}'s constrained optimisation algorithm to compute the maximum of $N_{F/\Q}$ on the facets of $V_0$.

\begin{center}
\def\arraystretch{1.5}
\begin{tabular}{| p{2.5cm} |  p{0.6cm} | p{2cm} | p{4cm} |}
\hline
 \centering$F$
&  \centering$\!\frac{\Delta_K}{4096}$
& $\!\!\underset{x\in\bar V_0}{\sup}|N_{F/\Q}(x)|$
& $x$ attaining this $\sup$ \\ \hline

$\Q(\zeta_{15})$
& \centering $\!\frac{1125}{4096}$
&  \centering$\frac{61}{256}$
& $\frac{1}{2}(\zeta_{15}^7+\zeta_{15}^5-\zeta_{15})$ \\ \hline

$\Q(\zeta_{20})$
&  \centering$\!\frac{125}{256}$
& \centering $\frac{125}{256}$
& $\frac{1}{2}(\zeta_{20}^6+\zeta_{20}^5+\zeta_{20}^4-\zeta_{20})$ \\ \hline

$\Q(\zeta_{24})$
&  \centering$\!\frac{9}{16}$
& \centering$\frac{9}{16}$
& $\frac{1}{2}(\zeta_{24}^7+\zeta_{24}^2+\zeta_{24})$ \\ \hline

$\!\!\Q(\sqrt{3},\sqrt{5},\sqrt{-1})$
&  \centering$\!\frac{225}{256}$
& \centering $\frac{225}{256}$
& $\left(\sqrt{15}-\sqrt{3}+\sqrt{5}-1\right)\frac{\sqrt{-1}}{2}$ \newline $+\frac{1}{2}\left(\sqrt{15}+3\sqrt{3}-\sqrt{5}-7\right)$ \\ \hline
\end{tabular}
\end{center}

\begin{remark}
Interestingly, while the bound $\Delta_{K}/4096$ is attained in the cases $F=\Q(\zeta_{20}),\Q(\zeta_{24}),\Q(\sqrt{3},\sqrt{5},\sqrt{-1})$, it is not attained for $\Q(\zeta_{15})$. Hence, in the case of $\Q(\zeta_{15})$, the division algorithm yields slightly smaller remainders than one would have expected.
\end{remark}

\section{Conclusion and open questions}

Building upon ideas by H.W. Lenstra, Jr, and Conway and Sloane, we present the first efficient algorithm to compute a Euclidean division in the rings of integers of the number fields
$\Q(\zeta_{15}),\Q(\zeta_{20}),\Q(\zeta_{24})$ and $\Q(\sqrt{3},\sqrt{5},\sqrt{-1})$. It computes a Euclidean division of two algebraic integers of height $n$ in $O(n\log(n))$ bit operations ;
we provide an efficient implementation of this algorithm in \textsc{SageMath}. Depending on the field, the size of the remainder in the computed division is close to that which could be predicted from Lenstra'a article.
Here are a  few remaining open questions.
\paragraph{Optimal algorithms for these fields} In our algorithm, the size of the remainders of the Euclidean divisions is larger than the Euclidean minimum of each of the four fields. Since we based our algorithm on the only known constructive proof of norm-Euclideanity for these rings, completely new methods are required in order to develop optimal division algorithms for these fields.

\paragraph{Efficient (norm-?)Euclideanity for $\Z[\zeta_{16}],\Z[\zeta_{17}]$ and $\Z[\zeta_{19}]$} For $n\leqslant 12$, norm-Euclidean division in $\Z[\zeta_n]$ is simply given by rounding or flooring \cite{kaiblinger2011}. Norm-Euclidean division in $\Z[\zeta_{16}]$ is described by Ojala in \cite{ojala77}, in an explicit yet not trivially algorithmic way: it requires looking for some minimal element in an infinite set. The next cyclotomic rings in the list, $\Z[\zeta_{17}]$ and $\Z[\zeta_{19}]$, are proven to be Euclidean because they are principal ideal domains; however, it is only conjectured that they are norm-Euclidean. This offers several research perspectives, from trying to prove norm-Euclideanity to trying to explicitly construct a Euclidean function which could be used in a division algorithm for these rings.

\subsubsection*{Acknowledgements}
The author is deeply grateful to François Morain for introducing him to the world of Euclidean division algorithms, and for fruitful discussions along the way.

\bibliographystyle{ACM-Reference-Format}
\bibliography{cycloE8.bib}

@article{Lezowski2014,
 ISSN = {00255718, 10886842},
 author = {Lezowski, Pierre},
 journal = {Mathematics of Computation},
 number = {287},
 pages = {1397--1426},
 publisher = {American Mathematical Society},
 title = {Computation of the euclidean minimum of algebraic number fields},
 volume = {83},
 year = {2014}
}

@article{lemmermeyer2004,
  title={The Euclidean algorithm in algebraic number fields},
  author={Lemmermeyer, Franz},
  journal={Expositiones Mathematicae},
  volume={13},
  pages={385--416},
  year={1995},
  publisher={SPEKTRUM ACADEMISCHER VERLAG},
  note={Updated version available online at \url{https://www.math.purdue.edu/~jlipman/553/EuclidSurvey.pdf}}
}

@article{harper2004,
  title={$\Z[\sqrt{14}]$ is Euclidean},
  author={Harper, Malcolm},
  journal={Canadian Journal of Mathematics},
  volume={56},
  number={1},
  pages={55--70},
  year={2004},
  publisher={Cambridge University Press}
}

@article{lenstra1978,
  title={Quelques exemples d'anneaux euclidiens},
  journal={Comptes Rendus de l'Académie des Sciences de Paris},
  author={Lenstra, Hendrik W},
  volume={286},
  number={Série A},
  pages={683-685},
  year={1978}
}

@article{kaiblinger2011,
  title={Cyclotomic rings with simple Euclidean algorithm},
  author={Kaiblinger, Norbert},
  journal={JP Journal of Algebra, Number Theory and Applications},
  volume={23},
  number={1},
  pages={61--76},
  year={2011}
}

@article{motzkin1949,
  title={The euclidean algorithm},
  author={Motzkin, Theodore},
  journal={Bulletin of the American Mathematical Society},
  volume={55},
  pages={1142–1146},
  year={1949}
}

@article{conway_sloane82,
  author={Conway, J. and Sloane, N.},
  journal={IEEE Transactions on Information Theory}, 
  title={Voronoi regions of lattices, second moments of polytopes, and quantization}, 
  year={1982},
  volume={28},
  number={2},
  pages={211-226},
  doi={10.1109/TIT.1982.1056483}
}

@article{conway_sloane822,
  author={Conway, J. and Sloane, N.},
  journal={IEEE Transactions on Information Theory}, 
  title={Fast quantizing and decoding and algorithms for lattice quantizers and codes}, 
  year={1982},
  volume={28},
  number={2},
  pages={227-232},
  doi={10.1109/TIT.1982.1056484}}

@article{ojala77,
  title={Euclid's Algorithm in the Cyclotomic Field $\mathbb{Q}(\zeta_{16})$},
  author={Ojala, T},
  journal={Mathematics of Computation},
  pages={268--273},
  year={1977},
  volume={31},
  number={137}
  }

@article{lenstra1975,
  title={Euclid’s algorithm in cyclotomic fields},
  author={Lenstra Jr, Hendrik W},
  journal={J. London Math. Soc},
  volume={10},
  pages={457--465},
  year={1975}
}

@incollection{hall13,
  title={Lie groups, Lie algebras, and representations},
  author={Hall, Brian C},
  year={2013},
  booktitle={Graduate Texts in Mathematics},
  volume={290},
  publisher={Springer},
  address={Cham}
}

@book{vzg13, 
place={Cambridge}, 
edition={3}, 
title={Modern Computer Algebra}, 
publisher={Cambridge University Press},
address={Cambridge}, 
author={von zur Gathen, Joachim and Gerhard, Jürgen}, year={2013}
}

@book{bourbaki07,
  title={El{\'e}ments de Math{\'e}matique. Groupes et alg{\`e}bres de Lie: Chapitres 4, 5 et 6},
  author={Bourbaki, Nicolas},
  year={2007},
  publisher={Springer Berlin},
  address={Heidelberg}
}

@article{harvey_vdh,
  title={Integer multiplication in time O(nlog$\backslash$,n)},
  author={Harvey, David and Van Der Hoeven, Joris},
  journal={Annals of Mathematics},
  volume={193},
  number={2},
  pages={563--617},
  year={2021},
  publisher={Department of Mathematics, Princeton University Princeton, New Jersey, USA}
}

@misc{deboer_vanwoerden25,
      author = {Koen de Boer and Wessel van Woerden},
      title = {Lattice-based Cryptography: A survey on the security of the lattice-based {NIST} finalists},
      howpublished = {Cryptology {ePrint} Archive, Paper 2025/304},
      year = {2025},
      url = {https://eprint.iacr.org/2025/304}
}

@manual{sagemath,
  Key          = {SageMath},
  Author       = {{The Sage Developers}},
  Organization = {{The Sage Developers}},
  Title        = {{S}ageMath, the {S}age {M}athematics {S}oftware {S}ystem ({V}ersion 10.7)},
  note         = {{\tt https://www.sagemath.org}},
  Year         = {2025},
}

\end{document}